\theoremstyle{plain}
\newtheorem{thrm}{Theorem}[section]
\newtheorem{lemma}[thrm]{Lemma}
\newtheorem{prop}[thrm]{Proposition}
\newtheorem{cor}[thrm]{Corollary}
\begin{document}
\newcommand{\sn}{\mathbb{S}^{n-1}}
\newcommand{\SL}{\mathcal L^{1,p}( D)}
\newcommand{\Lp}{L^p( Dega)}
\newcommand{\py}{  \partial_{x_{n+1}}^a}
\newcommand{\La}{\mathscr{L}_a}
\newcommand{\CO}{C^\infty_0( \Omega)}
\newcommand{\Rn}{\mathbb R^n}
\newcommand{\Rm}{\mathbb R^m}
\newcommand{\R}{\mathbb R}
\newcommand{\Om}{\Omega}
\newcommand{\Hn}{\mathbb H^n}
\newcommand{\aB}{\alpha B}
\newcommand{\eps}{\ve}
\newcommand{\BVX}{BV_X(\Omega)}
\newcommand{\p}{\partial}
\newcommand{\IO}{\int_\Omega}
\newcommand{\bG}{\boldsymbol{G}}
\newcommand{\bg}{\mathfrak g}
\newcommand{\bz}{\mathfrak z}
\newcommand{\bv}{\mathfrak v}
\newcommand{\Bux}{\mbox{Box}}
\newcommand{\e}{\ve}
\newcommand{\X}{\mathcal X}
\newcommand{\Y}{\mathcal Y}
\newcommand{\W}{\mathcal W}
\newcommand{\la}{\lambda}
\newcommand{\vf}{\varphi}
\newcommand{\rhh}{|\nabla_H \rho|}
\newcommand{\Ba}{\mathcal{B}_\beta}
\newcommand{\Za}{Z_\beta}
\newcommand{\ra}{\rho_\beta}
\newcommand{\n}{\nabla}
\newcommand{\vt}{\vartheta}
\newcommand{\its}{\int_{\{y=0\}}}

\numberwithin{equation}{section}

\newcommand{\RN} {\mathbb{R}^N}
\newcommand{\Sob}{S^{1,p}(\Omega)}
\newcommand{\Dxk}{\frac{\partial}{\partial x_k}}
\newcommand{\Co}{C^\infty_0(\Omega)}
\newcommand{\Je}{J_\ve}
\newcommand{\beq}{\begin{equation}}
\newcommand{\bea}[1]{\begin{array}{#1} }
\newcommand{\eeq}{ \end{equation}}
\newcommand{\ea}{ \end{array}}
\newcommand{\eh}{\ve h}
\newcommand{\Dxi}{\frac{\partial}{\partial x_{i}}}
\newcommand{\Dyi}{\frac{\partial}{\partial y_{i}}}
\newcommand{\Dt}{\frac{\partial}{\partial t}}
\newcommand{\aBa}{(\alpha+1)B}
\newcommand{\GF}{\psi^{1+\frac{1}{2\alpha}}}
\newcommand{\GS}{\psi^{\frac12}}
\newcommand{\HFF}{\frac{\psi}{\rho}}
\newcommand{\HSS}{\frac{\psi}{\rho}}
\newcommand{\HFS}{\rho\psi^{\frac12-\frac{1}{2\alpha}}}
\newcommand{\HSF}{\frac{\psi^{\frac32+\frac{1}{2\alpha}}}{\rho}}
\newcommand{\AF}{\rho}
\newcommand{\AR}{\rho{\psi}^{\frac{1}{2}+\frac{1}{2\alpha}}}
\newcommand{\PF}{\alpha\frac{\psi}{|x|}}
\newcommand{\PS}{\alpha\frac{\psi}{\rho}}
\newcommand{\ds}{\displaystyle}
\newcommand{\Zt}{{\mathcal Z}^{t}}
\newcommand{\XPSI}{2\alpha\psi \begin{pmatrix} \frac{x}{\left< x \right>^2}\\ 0 \end{pmatrix} - 2\alpha\frac{{\psi}^2}{\rho^2}\begin{pmatrix} x \\ (\alpha +1)|x|^{-\alpha}y \end{pmatrix}}
\newcommand{\Z}{ \begin{pmatrix} x \\ (\alpha + 1)|x|^{-\alpha}y \end{pmatrix} }
\newcommand{\ZZ}{ \begin{pmatrix} xx^{t} & (\alpha + 1)|x|^{-\alpha}x y^{t}\\
     (\alpha + 1)|x|^{-\alpha}x^{t} y &   (\alpha + 1)^2  |x|^{-2\alpha}yy^{t}\end{pmatrix}}
\newcommand{\norm}[1]{\lVert#1 \rVert}
\newcommand{\ve}{\varepsilon}
\newcommand{\D}{\operatorname{div}}
\newcommand{\G}{\mathscr{G}}

\title[On the space-like analyticity, etc.]{On the space-like analyticity in the extension problem for nonlocal parabolic equations}

\author{Agnid Banerjee}
\address{Tata Institute of Fundamental Research\\
Centre For Applicable Mathematics \\ Bangalore-560065, India}\email[Agnid Banerjee]{agnidban@gmail.com}

\author{Nicola Garofalo}
\address{University of Padova, Italy}\email[Nicola Garofalo]{nicola.garofalo@unipd.it}

\thanks{A. Banerjee  is supported in part by SERB Matrix grant MTR/2018/000267 and by Department of Atomic Energy,  Government of India, under
project no.  12-R \& D-TFR-5.01-0520. N. Garofalo is supported in part by a Progetto SID: ``Non-local Sobolev and isoperimetric inequalities", University of Padova, 2019.}

%
%
%
\keywords{}
\subjclass{35A02, 35B60, 35K05}

\maketitle

\begin{abstract}
In this note we give an elementary proof of the space-like real analyticity of solutions to a degenerate evolution problem that arises in the study of fractional parabolic operators of the type $(\p_t - \D_x(B(x)\nabla_x))^s$, $0<s<1$. Our primary interest is in the so-called \emph{extension variable}. We show that weak solutions that are even in such variable, are in fact real-analytic in the totality of the space variables. As an application of this result we prove the weak unique continuation property for nonlocal parabolic operators of the type above, where $B(x)$ is a uniformly elliptic matrix-valued function with real-analytic entries. 
\end{abstract}

\section{Introduction and Statement of the main result}

We consider the space $\R^{n+1}$ with generic variable $X=(x, x_{n+1})$, where $x\in \Rn$, $x_{n+1} \in \R$. We let $|X| = \sqrt{|x|^2 + x_{n+1}^2}$ and denote by $\mathbb B_r = \{X\in \R^{n+1}\mid |X|<r\}$ the open ball of radius $r$ centred at the origin. We also let $\R^{n+1}_+ =\{X\in \R^{n+1}\mid x_{n+1}>0\}$ and indicate the upper part of the open ball with $\mathbb B_r^{+} = \mathbb B_r \cap \R^{n+1}_+$. The symbol $\mathbb B_r^-$ will indicate the corresponding lower part of $\mathbb B_r$.  The thin space $\Rn\times\{0\}\subset \R^{n+1}$ will be routinely identified with $\R^n$, and we let $B_r = \mathbb B_r \cap \Rn$. 
We indicate with $x\to A(x) = [a_{ij}(x)]$ a given $(n+1)\times (n+1)$ matrix-valued function of the form
\begin{equation}\label{type}
a_{ij}(x) = \sum_{i,j=1}^n b_{ij} (x) e_i \otimes e_j  + e_{n+1} \otimes e_{n+1},
\end{equation} 
where the $b_{ij}$'s are assumed symmetric, uniformly elliptic, independent of $x_{n+1}$, and globally  real-analytic in $\R^n$. Moreover, we assume that the heat kernel of the parabolic operator $\partial_t - \D_x(B(x) \nabla_x)$  satisfies the stochastic completeness assumption \eqref{stoc} below. From now on, we indicate with $\D$ and $\nabla$ respectively the divergence and gradient with respect to the variable $X\in \R^{n+1}$. Given a function $U(X,t)$ in  $\R^{n+1}_+ \times \R$, and a number $a\in (-1,1)$, we refer to \eqref{nder} below for the meaning of weighted normal derivative $\py U((x,0),t)$ on the thin space $\Rn\times \R$.
The purpose of this note is to present an elementary proof of the following result. 

\begin{thrm}\label{main}
Let $a \in (-1, 1)$ and $T>0$. Assume that the function $U = U(X,t)$ is  a weak solution to the following degenerate parabolic problem 
\begin{equation}\label{p0}
\begin{cases}
\La U \overset{def} =\D(x_{n+1}^a A(x) \nabla U) - x_{n+1}^a  U_t=0\hspace{2mm} &\text{in}  \hspace{2mm}\mathbb B_3^+ \times (-1,T],
\\
\py U((x,0),t)= 0\hspace{2mm} &\text{in}  \hspace{2mm}B_3 \times (-1,T].
\end{cases}
\end{equation}
Let $\tilde U$ denote the even reflection of $U$ across $\{x_{n+1}=0\}$. Then $\tilde U\in C^\infty(\mathbb B_1 \times (0, T])$ and, for every fixed $t\in (0,T)$, the function  $X\to \tilde U(X,t)$ is real-analytic in $\mathbb B_1$. 
\end{thrm}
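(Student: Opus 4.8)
The plan is to first reduce the problem to the thin set via the even reflection, then establish analyticity in a stepwise fashion: first analyticity in the tangential variables $x$ (and in $t$ in a suitable sense), and only afterwards in the degenerate normal variable $x_{n+1}$. The even reflection $\tilde U$ across $\{x_{n+1}=0\}$, in view of the Neumann-type condition $\py U((x,0),t)=0$, is a weak solution of $\La \tilde U = 0$ in the full cylinder $\mathbb B_3\times(-1,T]$, where the weight $|x_{n+1}|^a$ is now an $A_2$ Muckenhoupt weight. The first step is therefore to record the interior regularity theory for such degenerate equations (De Giorgi--Nash--Moser for $A_2$-weights, in the spirit of Fabes--Kenig--Serapioni and its parabolic analogue): $\tilde U$ is locally bounded and H\"older continuous, and, crucially, $\tilde U$ is even in $x_{n+1}$, so it depends on $x_{n+1}$ only through $x_{n+1}^2$; this evenness is what will let us eventually treat the degeneracy.

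The heart of the argument is a Cauchy-type estimate on derivatives. I would work with the variable $y = x_{n+1}^2$ (or, equivalently, exploit that all odd-order $x_{n+1}$-derivatives of $\tilde U$ vanish on the thin set) and set up a quantitative scheme controlling the $L^2$-norms, with respect to the weighted measure $|x_{n+1}|^a dX\, dt$, of the derivatives $\partial_x^\beta \partial_t^k \tilde U$ on nested cylinders. The mechanism is the classical one for analyticity of solutions of parabolic equations with analytic coefficients: differentiate the equation $\La \tilde U = 0$ in the tangential and time directions, use Caccioppoli inequalities on the resulting equations (the weight and the operator $\La$ are translation-invariant in $x_{n+1}$-free directions, so tangential difference quotients are legitimate), and track the combinatorial growth of constants coming from the analytic dependence of $B(x)$ on $x$. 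This yields bounds of the form $\|\partial_x^\beta \partial_t^k \tilde U\|_{L^2_{|x_{n+1}|^a}(Q_r)} \le M R^{-|\beta|-2k}(|\beta|+2k)!$ on an interior cylinder, which by the weighted Sobolev embedding gives that $X \mapsto \tilde U(X,t)$ is real-analytic in the $x$-variables (and $\tilde U$ is analytic, or at least has the right Gevrey-type control, in $t$).

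To upgrade to joint analyticity in all of $X$, including $x_{n+1}$, I would use the equation itself to express normal derivatives in terms of tangential ones. Writing $\La \tilde U = 0$ as
\begin{equation*}
\partial_{x_{n+1}}\big(|x_{n+1}|^a \partial_{x_{n+1}} \tilde U\big) = -|x_{n+1}|^a \Big(\D_x(B(x)\nabla_x \tilde U) - \partial_t \tilde U\Big),
\end{equation*}
and using the evenness of $\tilde U$ (which forces $\partial_{x_{n+1}}\tilde U$ to vanish at $x_{n+1}=0$ and makes the weighted ODE in $x_{n+1}$ solvable by two integrations), one recovers all even-order $x_{n+1}$-derivatives of $\tilde U$ on the thin set recursively in terms of the $x$- and $t$-derivatives of $\tilde U$ there, with the factorial bounds propagating through the recursion — this is the well-known trick (as in the analyticity theory of Almgren-type frequency problems and the Caffarelli--Silvestre extension) of reading off the full Taylor expansion from its trace. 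Summing the resulting majorant series establishes real-analyticity of $X \mapsto \tilde U(X,t)$ in a full neighborhood $\mathbb B_1$.

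\textbf{Main obstacle.} The principal difficulty is handling the degeneracy of the weight $|x_{n+1}|^a$ near the thin set in the derivative estimates: the normal variable cannot be differentiated freely (a normal difference quotient is not controlled by the weighted Dirichlet energy), so the Caccioppoli/iteration scheme must be run only in the tangential and time variables, and the passage to normal analyticity must be done separately through the equation and the evenness, keeping careful track that the combinatorial constants in the two-step procedure multiply correctly. Ensuring the weighted Sobolev embeddings and trace theorems have constants with the right dependence on the number of derivatives — so that the factorial bounds survive — is the delicate bookkeeping at the core of the proof.
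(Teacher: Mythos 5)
Your proposal follows a genuinely different route from the paper. The paper does not run a Morrey--Nirenberg derivative-estimate scheme at all: it uses the explicit fundamental solution $\G(X,X',t)=p(x,x',t)\,p^{(a)}(x_{n+1},x'_{n+1},t)$ of the extension operator (the product of Eidelman's analytically continuable kernel for $\partial_t-\D_x(B(x)\nabla_x)$ and the Bessel heat kernel), derives from it a Green-type representation of $\tilde U(X,T)$ as an initial-data integral over $\mathbb B_2$ plus a boundary integral over $\partial\mathbb B_2\times(0,T)$, and then complexifies $X\mapsto Z=X+iY$ inside the kernel. Since the spatial singularity of the kernel sits at $X'=X$ while the boundary integral only sees $|X-X'|>1$, analyticity follows once one shows the complexified kernel stays uniformly bounded as $t\to T$ for $|Y|\le\ve_0$ small; the entire technical content is a case analysis on the complexified Bessel factor. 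Your approach, if completed, would be more self-contained (no explicit kernel required) and closer to the classical analytic-hypoellipticity machinery; the paper's approach trades the combinatorial bookkeeping for explicit kernel estimates.

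There is, however, a genuine gap at the last step of your plan. The recursion $(2k+2)(2k+1+a)\,a_{k+1} = -\D_x(B(x)\nabla_x a_k)+\partial_t a_k$ for the even Taylor coefficients of $\tilde U$ in $x_{n+1}$ does produce, together with your tangential/time factorial bounds, a convergent majorant series, hence a real-analytic function $W(X,t)=\sum_k a_k(x,t)x_{n+1}^{2k}$ solving the equation with the same trace and zero weighted Neumann data. But this only shows that such a $W$ exists; it does not show $\tilde U=W$. To identify the two you need either (i) direct factorial bounds on $\partial_{x_{n+1}}^{2k}\tilde U$ itself up to the thin set together with a Taylor-remainder estimate --- precisely what the weighted energy method cannot deliver, as you yourself note: normal difference quotients are not controlled by the weighted Dirichlet energy, and $|x_{n+1}|^a$ is not analytic at $x_{n+1}=0$ --- or (ii) a uniqueness theorem for the characteristic Cauchy problem on $\{x_{n+1}=0\}$ for this degenerate operator, asserting that a solution is determined near the thin set by its Dirichlet and weighted Neumann data there. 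The latter is essentially a unique continuation statement from the degenerate hypersurface and is of the same depth as the application (Proposition 1.3) the theorem is meant to yield, so invoking it would be close to circular. A further point to watch: the time regularity available for parabolic problems is only Gevrey-$2$, so the $(2k)!$ growth of $\|\partial_t^k\tilde U\|$ must be exactly cancelled by the product $\prod_{j\le k}(2j)(2j-1+a)$ appearing in the recursion; this does work out arithmetically, but it is part of the bookkeeping you must actually carry through rather than a place where the estimates have slack.
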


We note that if we replace the Neumann condition in \eqref{p0} with the Dirichlet assumption $U((x,0),t)=0$ in $B_3 \times (-1,T]$, then Theorem \ref{main} ceases to hold. Take for instance $B(x) \equiv \mathbb I_n$ and $U(X, t)= x_{n+1}^{1-a}$.
As it is well-known by now, if $s \in (0,1)$ and $a=1-2s$, then \eqref{p0} represents the parabolic counterpart of the Caffarelli-Silvestre  extension problem for the fractional operator $(\partial_t- \D_x(B(x) \nabla_x))^s$. In view of this aspect, as a consequence of Theorem \ref{main} we obtain the following.

\begin{cor}
Let $u(x,t)$ solve $(\partial_t -\D_x( B(x) \nabla_x))^s u=0$ in $B_1 \times (-1,0)$. Then $x\to u(x,t)$ is real-analytic in $B_1$ for any fixed $t$.
\end{cor}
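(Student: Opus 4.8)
The plan is to establish interior space-like analyticity by combining three ingredients: (i) a careful analysis of the degenerate equation near the thin set, using the Neumann condition to absorb the weight; (ii) an $L^2$-based Bernstein-type estimate on weighted derivatives that is uniform in the order of differentiation; and (iii) a bootstrap that upgrades the time regularity enough to freeze the $t$-variable. First I would set up the natural weighted energy spaces: since $a\in(-1,1)$, the weight $x_{n+1}^a$ is an $A_2$ Muckenhoupt weight, so the degenerate operator $\La$ falls under the Fabes--Kenig--Serapioni theory, giving local boundedness, a Harnack inequality, and De Giorgi--Nash--Moser H\"older continuity of $U$ and of the weighted conormal flux $x_{n+1}^a\py U$. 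The even reflection $\tilde U$ then solves $\La \tilde U = 0$ weakly across $\{x_{n+1}=0\}$ in $\mathbb B_3\times(-1,T]$ precisely because the Neumann datum vanishes; this is the step that makes $\tilde U$ a genuine (weak) solution in a full neighborhood rather than just in a half-ball, and I would prove it by testing against functions odd in $x_{n+1}$ and using the vanishing of $\py U$.

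Next, with $y=x_{n+1}$, I would split the Laplacian into its tangential part $\D_x(B(x)\nabla_x)$ and the one-dimensional degenerate piece $\partial_y(y^a\partial_y)$, and run a two-scale differentiation scheme. Tangential and time derivatives commute with the weight, so by the standard Nirenberg difference-quotient method one gets, for each $k$, that $\partial_x^\alpha\partial_t^j \tilde U\in L^2(y^a\,dX\,dt)$ on slightly smaller cylinders with the sharp Cauchy-type bound $\|\partial_x^\alpha\partial_t^j\tilde U\|\le C^{|\alpha|+j}(|\alpha|+2j)!\,\rho^{-(|\alpha|+2j)}$, where the real-analyticity of the $b_{ij}$ feeds in through the Leibniz expansion of the commutators $[\partial_x^\alpha,\D_x(B\nabla_x)]$ (here the parabolic scaling $(\alpha,2j)$ is what one must respect). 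The normal direction is handled separately: from the equation, $\partial_y(y^a\partial_y\tilde U)$ equals a controlled combination of tangential and time derivatives, and iterating this identity expresses all mixed derivatives involving $y$ in terms of the already-estimated ones, at the cost of combinatorial factors that I would track to keep the factorial growth of the right type. Finally a weighted Sobolev embedding converts the $L^2(y^a)$ bounds into pointwise bounds on $\tilde U$ and its derivatives, yielding $\tilde U\in C^\infty$ and, for fixed $t$, the convergence of the Taylor series in $X$; freezing $t$ is legitimate because the separate-variable estimates give, in particular, analyticity in $x$ with coefficients that are continuous in $t$.

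The main obstacle I anticipate is the normal-direction bookkeeping: near $\{y=0\}$ the operator $\partial_y(y^a\partial_y)$ is not elliptic, so one cannot simply iterate elliptic regularity to gain two $y$-derivatives at a time with innocuous constants. The correct move is to work with the ``good'' variables --- using that $\tilde U$ is even in $y$, so its odd-order $y$-derivatives at $y=0$ vanish and one effectively differentiates in $y^2$, or equivalently passes to the variable $z = y^{2-a}$ (respectively $z=y^{1+a}$) in which the one-dimensional part becomes a regular-singular Bessel-type operator with analytic coefficients. One then has to verify that the change of variables interacts well with the tangential estimates and does not destroy the factorial growth rates; this matching of the two one-parameter families of estimates, and the verification that the constants in the commutator and Caccioppoli inequalities can be chosen uniformly as the cylinders shrink, is where the real work lies. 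Everything else --- the $A_2$-weight machinery, the difference-quotient argument, the final embedding --- is by now routine.
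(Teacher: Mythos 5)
Your proposal does not follow the paper's route. The paper obtains this corollary in one line from Theorem \ref{main}: one passes to the Caffarelli--Silvestre extension $U$ of $u$, which solves \eqref{p0} with vanishing weighted Neumann datum on the set where the nonlocal equation holds, invokes the space-like analyticity of $\tilde U$, and restricts to $x_{n+1}=0$. You instead propose to re-prove the analyticity of the extension from scratch by a Morrey--Nirenberg type scheme (nested-cylinder Caccioppoli estimates, difference quotients, factorial bookkeeping). That is a legitimate alternative program in principle, but as written it contains a genuine gap precisely at the step you yourself flag as ``where the real work lies'': the normal direction. The tangential and time derivatives do commute with the weight $|x_{n+1}|^a$ and your claimed bounds $C^{|\alpha|+j}(|\alpha|+2j)!$ are of the right shape there; but to conclude analyticity of $\tilde U$ in the full variable $X$ (which is what Theorem \ref{main} asserts and what the corollary's restriction argument ultimately rests on, and more importantly what is needed to close any bootstrap that mixes normal and tangential derivatives) you must produce uniform-in-order estimates for $\p_{x_{n+1}}^{2k}\tilde U$ near $\{x_{n+1}=0\}$. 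Your two proposed fixes do not deliver this: the substitution $z=x_{n+1}^{1+a}$ (or $x_{n+1}^{2-a}$) turns the Bessel part into an operator with coefficients that are fractional powers of $z$, hence not analytic at $z=0$; the substitution $s=x_{n+1}^2$ yields $4s\,\p_s^2+2(1+a)\p_s$, which has analytic coefficients but degenerates at $s=0$, so that analyticity up to $\{s=0\}$ is a Baouendi--Goulaouic type boundary-degenerate result, not a routine consequence of interior elliptic estimates. Likewise, the Fabes--Kenig--Serapioni $A_2$ theory gives H\"older continuity and a Harnack inequality, but not the quantitative, order-independent constants in the higher-order energy inequalities that the analytic bootstrap requires. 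Until these points are supplied, the argument is a program rather than a proof.

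It is worth contrasting this with how the paper avoids the difficulty altogether. The analyticity in the extension variable is extracted from the \emph{explicit} fundamental solution $\G(X,X',t)=p(x,x',t)\,p^{(a)}(x_{n+1},x_{n+1}',t)$ of \cite{Gcm}: by the power series \eqref{besseries}, the function $w^{-\nu}I_\nu(w)$ is entire in $w^2$, so $p^{(a)}$ is manifestly real-analytic (indeed entire) in $x_{n+1}$, with no degenerate PDE estimate needed at the thin set. The analytic continuation of $p(x,x',t)$ in $x$ is supplied by Eidelman's Lemma \ref{ei}. The whole burden of the proof is then shifted to a Green representation formula \eqref{i4} on $\partial\mathbb B_2\times(0,T)$ and to uniform bounds on the complexified kernel $\G(Z,X',T-t)$ as $t\to T$ for $Z$ in a thin complex neighborhood $D(\ve_0)$, which is a matter of elementary (if delicate) case analysis on the Bessel asymptotics \eqref{bessel}. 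If you wish to pursue your $L^2$ approach, the missing ingredient you need to locate or prove is precisely the analytic hypoellipticity up to $\{x_{n+1}=0\}$ of the reflected degenerate operator; otherwise, the shortest correct write-up of the corollary is simply the reduction to Theorem \ref{main} described above.
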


The reader should note that we have assumed global real-analyticity of $B(x)$ only for the simplicity of exposition. We only need $B(x)$ to be real-analytic in, say,  $B_3$. Moreover, the stochastic completeness hypothesis \eqref{stoc} is easily ensured  by e.g. smoothly extending $B(x)$ to the whole of $\Rn$ in such a way that $B(x) \equiv  \mathbb I_n$ outside $B_5$. 

Real-analyticity results for nonlocal operators have a
long history that goes back to the seminal work of M. Riesz \cite{Ri} for the fractional Laplacian, and especially Kotake-Narasimhan \cite{KN} for operators of the type $L^s$, where $L$ is a general elliptic operator of order $2m$.
In the special case $A(x)\equiv \mathbb I_{n+1}$, and for time-independent solutions to \eqref{p0}, the real-analyticity has been recently proved in \cite[Appendix B]{JP} using functional analytic tools and weak type estimates. Differently from \cite{JP}, our approach is based on the fundamental solution of the extended operator in \eqref{p0} computed in \cite{Gcm}, which we use to obtain
an explicit Green type representation of symmetric solutions. In order to circumvent a singularity in such representation, we use: (a) the space-like real analyticity of the fundamental solution to complexify the space variables; (b) a limiting argument. Part (b) involves careful estimates of the heat kernel in \eqref{funda} below of the extended operator in \eqref{p0}, and this is the key novelty of our work. This general scheme, based on Green representation and  complexifying the space variables, is  inspired to an idea in \cite{John}  for the heat equation.

As  an interesting application of Theorem \ref{main} we obtain the following weak unique continuation property.

\begin{prop}\label{wucp}
Let $B(x)$ be  a uniformly elliptic matrix-valued function with real-analytic entries and let $u$ be a solution to $(\p_t - \D_x(B(x)\nabla_x))^s u=0$, such that $u=0$ in $B_1 \times (-1,0)$. Then $u \equiv 0$ in $\Rn \times (-1,0)$. 
\end{prop}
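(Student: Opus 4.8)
The plan is to deduce Proposition \ref{wucp} from Theorem \ref{main} by a now-standard two-step argument: first translate the vanishing of $u$ on $B_1 \times (-1,0)$ into a statement about the extension $U$, invoke the space-like real analyticity from Theorem \ref{main} to propagate the vanishing across the thin space, and then use the analyticity of $B(x)$ to propagate the vanishing of the trace $u$ itself throughout $\Rn$ by a connectedness argument. Concretely, let $U(X,t)$ be the Caffarelli--Silvestre type extension of $u$, so that $U$ solves the degenerate problem $\La U = 0$ in $\R^{n+1}_+ \times (-1,0)$, with $U((x,0),t) = u(x,t)$ on the thin space and with the weighted Neumann derivative $\py U((x,0),t)$ being (a constant multiple of) $(\p_t - \D_x(B(x)\nabla_x))^s u = 0$ in $B_1 \times (-1,0)$. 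Since $u \equiv 0$ on $B_1 \times (-1,0)$, \emph{both} the Dirichlet and Neumann data of $U$ vanish on $B_1 \times (-1,0)$; in particular $\py U((x,0),t)=0$ on $B_3' \times (-1,0)$ for a slightly smaller ball — here one must be a little careful, since a priori the Neumann data is only known to vanish on $B_1$, not $B_3$, so one should rescale the statement of Theorem \ref{main} to the ball $B_1$ (which is legitimate, as remarked in the paper, since only real-analyticity of $B$ in a neighborhood of the relevant ball is used).

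Next I would form the even reflection $\tilde U$ across $\{x_{n+1}=0\}$. Because $\py U((x,0),t)=0$, the reflected function $\tilde U$ is a weak solution of $\La \tilde U = 0$ across the thin space in a full neighborhood $\mathbb B \times (t_0, 0)$ of a point of $B_1 \times (-1,0)$, and Theorem \ref{main} applies to give that, for each fixed $t$, $X \mapsto \tilde U(X,t)$ is real-analytic in $\mathbb B$. On the slice $\{x_{n+1}=0\}$ the even reflection $\tilde U$ agrees with $u$, and $u \equiv 0$ on $B_1 \times (-1,0)$ means $\tilde U(X,t)$ vanishes on an $(n)$-dimensional relatively open subset of the $(n+1)$-dimensional ball $\mathbb B$ for each fixed $t$ in an interval. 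A real-analytic function on a connected open set in $\R^{n+1}$ that vanishes on a set with an accumulation point in every coordinate direction of a codimension-one slice need not vanish identically — so here the correct conclusion is not that $\tilde U \equiv 0$, but rather that all \emph{tangential} derivatives of $U$ in the $x$-variables vanish on $B_1 \times (-1,0)$, which combined with $\py U = 0$ and the equation $\La U = 0$ forces, inductively in the number of $x_{n+1}$-derivatives, that \emph{all} derivatives of $\tilde U$ vanish on $B_1 \times (-1,0)$. By the real-analyticity of $X \mapsto \tilde U(X,t)$, this yields $\tilde U(\cdot,t) \equiv 0$ on the connected set $\mathbb B_1$, hence $u(\cdot,t) \equiv 0$ on $B_1$; but this was already assumed — the real gain is that $U(\cdot,\cdot,t) \equiv 0$ on all of $\mathbb B_1$, i.e. the extension itself vanishes in a full-dimensional neighborhood.

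At this point I would pass from vanishing of the full extension to unique continuation for $u$ on $\Rn$. Since $U$ vanishes on an open subset of $\R^{n+1}_+ \times (-1,0)$ and $U$ solves the uniformly elliptic–in–$x$ degenerate parabolic equation $\La U = 0$ (with real-analytic coefficients in $x$, uniformly nondegenerate away from $\{x_{n+1}=0\}$), the classical (backward-)uniqueness / strong unique continuation theory for parabolic equations with analytic-in-space coefficients — applied for each fixed $t$, or via space-like analyticity of $U$ itself which also follows from Theorem \ref{main} — propagates the vanishing of $U$ to the whole connected region $\R^{n+1}_+ \times (-1,0)$ where the equation holds; restricting to the thin space gives $u \equiv 0$ on $\Rn \times (-1,0)$. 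Alternatively, and more in the spirit of the paper, one notes that $U(\cdot,t)$ is real-analytic in the $x$-variables on $\Rn \times \{x_{n+1}=0\}$ too (by Theorem \ref{main} applied around any point, using that $B$ is globally real-analytic), so $u(\cdot,t) = U(\cdot,0,t)$ is real-analytic on all of $\Rn$ for each fixed $t$, and a real-analytic function on the connected set $\Rn$ that vanishes on the open set $B_1$ vanishes identically.

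The main obstacle I anticipate is the \emph{book-keeping at the thin space}: one must correctly account for the fact that Theorem \ref{main} yields analyticity in the $(n+1)$ spatial variables of the reflected solution, whereas the hypothesis only gives vanishing on the $n$-dimensional slice, so one cannot conclude $\tilde U \equiv 0$ directly and must instead run the inductive argument (using $\La \tilde U = 0$ together with $\py U = 0$) to show all spatial derivatives vanish on the slice before invoking analyticity. A secondary technical point is justifying that the extension $U$ of $u$ is well-defined and solves \eqref{p0} in the required domain (with the correct identification of $\py U$ as a multiple of the fractional operator applied to $u$), which requires $u$ to be defined and a solution on enough of $\Rn \times (-1,0)$ — this is exactly why the global real-analyticity (or at least global ellipticity and stochastic completeness) of $B$ is convenient, ensuring the parabolic fractional operator and its extension are globally meaningful.
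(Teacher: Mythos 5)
Your overall architecture is correct and essentially the paper's: form the extension $U$, observe that $u\equiv 0$ on $B_1\times(-1,0)$ kills both Dirichlet and Neumann data there, invoke Theorem \ref{main} for real-analyticity of $\tilde U(\cdot,t)$ near the thin space, run the inductive bootstrap (the paper cites \cite[Lemma 5.1]{LLR}) to get infinite-order vanishing in $x_{n+1}$, and conclude $\tilde U(\cdot,t)\equiv 0$ in $\mathbb B_1$. All of that matches.

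The gap is in your final propagation step, and it is a genuine error, not just vagueness. Your ``alternatively, and more in the spirit of the paper'' argument --- that $U(\cdot,t)$, hence $u(\cdot,t)$, is real-analytic on \emph{all} of $\R^n$ ``by Theorem \ref{main} applied around any point, using that $B$ is globally real-analytic'' --- is false, and so is the parenthetical ``via space-like analyticity of $U$ itself which also follows from Theorem \ref{main}.'' Theorem \ref{main} is a statement about solutions satisfying the \emph{zero Neumann condition} $\py U((x,0),t)=0$. For the extension of $u$, one has $\py U((x,0),t)= c\,(\p_t-\D_x(B\nabla_x))^s u(x,t)$, and this is known to vanish only on $B_1\times(-1,0)$ (where the fractional equation holds). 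Outside $B_1$ the Neumann data is generically nonzero and unknown, Theorem \ref{main} does not apply, and $u(\cdot,t)$ need not be analytic there. This is precisely the nonlocal subtlety the proposition is about --- the zero set propagates even into the region where the equation is \emph{not} satisfied; if $u$ were globally analytic for free, the proposition would be an immediate corollary of classical analyticity and there would be nothing to prove.

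The correct escape route is the one the paper takes, and it passes through the \emph{thick} space, not the thin one. Once $\tilde U(\cdot,t)\equiv 0$ on $\mathbb B_1$, one notes that away from $\{x_{n+1}=0\}$ the operator $\La$ is \emph{uniformly} parabolic with smooth (real-analytic) coefficients and no boundary condition is needed; then a standard parabolic strong-unique-continuation argument propagates the vanishing to all of $\R^{n+1}_+\times(-1,0)$. Concretely, the paper first applies \cite[Theor. 1]{AV} to upgrade vanishing on an open set to infinite-order vanishing in both space \emph{and} time at interior points of $\mathbb B_1^+\times(-1,0)$, and then invokes the strong unique continuation theorem \cite[Theor. 1]{EF} to conclude $U\equiv 0$ in $\R^{n+1}_+\times(-1,0)$; letting $x_{n+1}\to 0^+$ yields $u\equiv 0$ on $\R^n\times(-1,0)$. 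Your phrase ``the classical (backward-)uniqueness / strong unique continuation theory \dots applied for each fixed $t$'' is also off: the equation $\La U=0$ involves $\p_t$, so freezing $t$ does not give a PDE to which elliptic unique continuation applies; the relevant results are genuinely space-time parabolic. If you prefer an analyticity-based rather than Carleman-based propagation, you could instead appeal to the classical space-like real-analyticity of solutions to uniformly parabolic equations with analytic coefficients in the \emph{interior} region $\{x_{n+1}>0\}$ (e.g.\ Eidelman \cite{Ei}), not to Theorem \ref{main}; but in either case the conclusion must be drawn about $U$ in $\R^{n+1}_+$, and only afterwards restricted to the thin space.
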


The reader should note that the previous result displays a purely nonlocal phenomenon: the zero set of $u$ propagates even in the region where the equation is not satisfied. Besides its own interest, Proposition \ref{wucp} finds application to Runge type approximations for inverse problems, and in the special case when $B(x) \equiv \mathbb I_n$ it has been earlier obtained in \cite[Prop. 5.5]{LLR} by means of Carleman estimates. Our elementary approach shows that the use of Carleman estimates can be avoided.

The paper is organized as follows. In Section \ref{s:n} we introduce some basic notations  and gather some preliminary results that are relevant to our work. In Section \ref{s:m} we prove Theorem \ref{main} and Proposition \ref{wucp}. 


\section{Preliminaries}\label{s:n}

In this section we collect some known results that will be used in this note. Without loss of generality, we  will assume that the uniformly parabolic operator $\partial_t - \D(B(x)\nabla_x)$ in $\Rn\times \R$ has a globally defined fundamental $p(x,x',t)$ that satisfies for every $x\in \Rn$ and $t>0$
\begin{equation}\label{stoc}
P_t 1(x,t) = \int_{\R^n} p(x,x',t) dx' = 1.
\end{equation}
{\allowdisplaybreaks
We will also assume without any restriction that 
the power series expansion centred at $0$ of the matrix $A(x)$ converges uniformly in $ B_8$. Henceforth, following a standard use we will write $z = x+iy, z' = x'+i y'$, etc. for points in $\mathbb C^n$. We will routinely identify with $x\in \Rn$ the point $x+ i 0\in \mathbb C^n$. In Section \ref{s:m} we will also need to complexify the thick space $\R^{n+1}$. Thus, we will denote by $Z = X+ i Y, Z' = X' + i Y'$, etc., points in $\mathbb C^{n+1}$. Again, the point $X= (x,x_{n+1})\in  \R^{n+1}$ will be routinely identified with $X + i 0\in \mathbb C^{n+1}$. We then have the following result, which is \cite[Theor. 8.1, p. 178]{Ei}. We state it as a lemma since in this note we make multiple references to it. 
} 

{\allowdisplaybreaks
\begin{lemma}\label{ei}
Let $x' \in  B_3$. For  $t>0$ the fundamental solution $p(x,x',t)$  can be analytically continued to a function $p(z,x',t)$ defined in the complex ball $\{z= x + i y  \in \mathbb C^n\mid |z-x'| < 5\}$. Moreover, there exist positive constants $C_0, c_0$ and $c_1$, depending on the ellipticity and the real-analytic character of the coefficient matrix $A(x)$, such that the following estimates hold 
\begin{equation}\label{gest}
\begin{cases}
|p(z,x',t)| \leq C_0 t^{-\frac{n}{2}} e^{-\frac{c_0 |x-x'|^2 -c_1|y|^2}{4t}},
\\
|\nabla_{x'} p(z,x',t) |\leq C_0 t^{-\frac{n+1}{2}} e^{-\frac{c_0 |x-x'|^2 -c_1|y|^2}{4t}}.
\end{cases} 
\end{equation} 
\end{lemma}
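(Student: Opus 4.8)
The result is classical --- it is \cite[Theor. 8.1]{Ei} --- and the proof I would reconstruct is Levi's parametrix method applied to the complexified operator. Since the power series of $A(x)$ at the origin converges uniformly in $B_8$, the map $x\mapsto B(x)$ extends to a matrix-valued holomorphic function $B(z)$ on a complex neighbourhood $\mathcal N$ of $\overline{B_8}$, with $\det B(z)\neq 0$ on $\mathcal N$ (after shrinking), while on the real slice $B$ is symmetric and uniformly elliptic with constants $\lambda,\Lambda$. The basic building block is the frozen-coefficient Gaussian
\[
Z(x,x';t)=\frac{(4\pi t)^{-n/2}}{\sqrt{\det B(x')}}\,\exp\Big(-\frac{\langle B(x')^{-1}(x-x'),x-x'\rangle}{4t}\Big),
\]
the fundamental solution of $\p_t-\D_x(B(x')\nabla_x)$ with coefficients frozen at $x'$. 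One then writes
\[
p(x,x';t)=Z(x,x';t)+\int_0^t\!\!\int_{\R^n}Z(x,\xi;t-\tau)\,\Phi(\xi,x';\tau)\,d\xi\,d\tau,
\]
where $\Phi$ solves the Volterra equation $\Phi=K+K*\Phi$, with $K(x,x';t)=\D_x\big((B(x')-B(x))\nabla_x Z\big)$ (the result of applying $\p_t-\D_x(B(x)\nabla_x)$ to the parametrix) and $*$ convolution in $(x,t)$ over $\R^n$. This is the classical construction: $|K(x,x';t)|\le Ct^{-(n+1)/2}e^{-c|x-x'|^2/t}$, each further convolution gains a factor $t^{1/2}$, so $\sum_{k\ge 1}K^{*k}$ converges, $|\Phi(\xi,x';\tau)|\le C\tau^{-(n+1)/2}e^{-c|\xi-x'|^2/\tau}$, and $p$ is the fundamental solution (uniqueness from standard parabolic theory).

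The continuation of the leading term is immediate. For fixed real $x'\in B_3$ the function $z\mapsto Z(z,x';t)$ is entire; writing $z=x+iy$ and using that the eigenvalues of $B(x')^{-1}$ lie in $[\Lambda^{-1},\lambda^{-1}]$, one has $\operatorname{Re}\langle B(x')^{-1}(z-x'),z-x'\rangle\ge\Lambda^{-1}|x-x'|^2-\lambda^{-1}|y|^2$, whence $|Z(z,x';t)|\le C_0t^{-n/2}e^{(-c_0|x-x'|^2+c_1|y|^2)/4t}$ with $c_0=\Lambda^{-1}$, $c_1=\lambda^{-1}$; differentiating in the real variable $x'$ costs an extra $O(t^{-1/2})$ (plus lower-order terms from $\nabla_{x'}B$, $\nabla_{x'}\det B$), producing the second line of \eqref{gest} for $Z$. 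Using holomorphy of $B$ on $\mathcal N$ the same type of bounds hold for $z\mapsto K(z,x';t)$.

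The heart of the matter is the remainder $R(z,x';t)=\int_0^t\!\!\int_{\R^n}Z(z,\xi;t-\tau)\,\Phi(\xi,x';\tau)\,d\xi\,d\tau$, which must be continued holomorphically to $\{|z-x'|<5\}$ --- note $|z|<8$ there, so $z\in\mathcal N$ --- with a bound dominated by the right-hand side of \eqref{gest}. This is the obstacle: $|Z(z,\xi;s)|$ is only bounded by $s^{-n/2}e^{(-c_0|x-\xi|^2+c_1|y|^2)/4s}$, and the factor $e^{c_1|y|^2/4s}$ makes the naive triangle-inequality bound on the $\tau$-integral diverge as $\tau\uparrow t$ once $y\neq0$; the spurious growth in $|y|$ must instead be cancelled against the oscillation $Z(z,\xi;s)$ acquires in $\xi$ for complex $z$. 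The device I would use is holomorphy in the integration variable: an induction on the Levi iteration --- each factor $K(\cdot,\cdot;\cdot)$ being holomorphic in its first slot on $\mathcal N$ while all intermediate convolutions run over $\R^n$ --- shows $\xi\mapsto\Phi(\xi,x';\tau)$ extends holomorphically to $\mathcal N$ with analogous Gaussian bounds, after which one deforms the $\xi$-contour where the integrand is largest so as to match $\operatorname{Im}z$, converting $e^{c_1|y|^2/4s}$ into honest Gaussian decay, the far part of the $\xi$-integral being already harmless by the decay of $Z(z,\xi;s)$ in $|x-\xi|$; keeping the deformed contour inside $\mathcal N$ is precisely what forces the numerology $8=5+3$ (with a harmless loss of room). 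Carrying this out, adding the contribution of $Z$, and --- for the second line of \eqref{gest} --- differentiating $p=Z+Z*\Phi$ in $x'$ (where $|\nabla_{x'}Z|\lesssim t^{-1/2}$ times a Gaussian, and similarly for $\Phi$), yields \eqref{gest} after relabelling the constants; for real $z=x$ the resulting function equals $p(x,x';t)$ by uniqueness, so the continuation is the genuine one. Thus the one substantial step is the contour/oscillatory-integral control of $R$ past the $e^{c_1|y|^2/4s}$ singularity; everything else is bookkeeping on Gaussian convolutions, and the continuation radius $5$ and the exponential loss $+c_1|y|^2$ in \eqref{gest} are the visible footprints of, respectively, the analyticity domain of $B$ and the entire continuation of the frozen Gaussian $Z$.
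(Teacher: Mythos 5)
The paper offers no proof of this lemma: it is quoted verbatim from \cite[Theor. 8.1, p. 178]{Ei}, so there is nothing internal to compare against, and your parametrix reconstruction is precisely the standard argument behind that citation. The step you isolate as substantial --- deforming the $\xi$-contour to absorb the $e^{c_1|y|^2/4s}$ growth of the complexified frozen Gaussian as $\tau\uparrow t$, which in turn requires propagating holomorphy of $\Phi$ in its first slot (with uniform constants) through the Volterra iteration --- is indeed where the work lies, and your account of it is sound. The one point you pass over too quickly is the second estimate in \eqref{gest}: differentiating $K(x,x';t)=\D_x\big((B(x')-B(x))\nabla_x Z\big)$ in $x'$ destroys the cancellation $|B(x')-B(x)|\lesssim|x-x'|$ that makes the Volterra series converge, producing a non-integrable $t^{-(n+2)/2}$ singularity; the standard remedy is to invoke the symmetry $p(x,x',t)=p(x',x,t)$ of the self-adjoint divergence-form operator and estimate the gradient in the forward variable, where differentiating $Z$ and the iterated kernels genuinely only costs $t^{-1/2}$.
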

}
{\allowdisplaybreaks
With the notation in the opening of the note, given a function $U(X,t) = U((x,x_{n+1}),t)$ in $\R^{n+1}_+ \times (0,\infty)$, and a number $a\in (-1,1)$, we denote with $\py U$ the weighted normal derivative
\begin{equation}\label{nder}
\py U((x,0),t)\overset{def}{=}   \operatorname{lim}_{x_{n+1} \to 0^+}  x_{n+1}^a \partial_{x_{n+1}} U((x,x_{n+1}),t).
\end{equation}}
{\allowdisplaybreaks The second-order degenerate parabolic differential operator $\La$ is defined as in \eqref{p0} above. As we have mentioned in the introduction, if $s \in (0,1)$ and $a=1-2s$, then \eqref{p0} represents the parabolic counterpart of the Caffarelli-Silvestre  extension problem for the fractional operator $(\partial_t- \D(B(x) \nabla_x))^s$. For some background on this problem the reader is referred to \cite{Jo, CS, NS, ST, BG, BSt, LN}. 
}

{\allowdisplaybreaks We next recall that it was proved in \cite{Gcm} that, given $\phi \in C_0^{\infty}(\R^{n+1}_+)$, the solution of the Cauchy problem with Neumann condition
\begin{align}\label{CN}
\begin{cases}
\La U=0 \hspace{2mm} &\text{in}  \hspace{2mm}\R^{n+1}_+ \times (0,\infty)\\
U(X,0)=\phi(X), \hspace{2mm} &X \in \R^{n+1}_+,\\
\py U(x, 0, t) = 0 \hspace{2mm} & x\in \Rn,\ t \in (0, \infty)
\end{cases}
\end{align}
is given by the formula 
\begin{equation}\label{Ga}
\mathscr P^{(a)}_t \phi(X) \overset{def}{=} U(X,t) = \int_{\R^{n+1}_+} \phi(X') \G (X,X',t) (x'_{n+1})^a dX',
\end{equation}
where 
\begin{equation}\label{funda}
\G(X,X',t) = p(x,x',t)\ p^{(a)}(x_{n+1},x'_{n+1},t).
\end{equation}
In \eqref{funda} we have indicated with $p(x,x',t)$ the fundamental solution of the operator $\partial_t- \D(B(x) \nabla_x)$, and with $p^{(a)}(x_{n+1},x'_{n+1},t)$ that of the Bessel operator $\mathscr B_a = \p^2_{x_{n+1}} + \frac a{x_{n+1}} \p_{x_{n+1}}$ on $(\R^+,x_{n+1}^a dx_{n+1})$  with Neumann boundary condition in $x_{n+1}=0$ (reflected Brownian motion). Such function is given by the formula 
\begin{align}\label{fs}
p^{(a)}(x_{n+1},x'_{n+1},t) & =(2t)^{-\frac{a+1}{2}}\left(\frac{x_{n+1} x'_{n+1}}{2t}\right)^{\frac{1-a}{2}}I_{\frac{a-1}{2}}\left(\frac{x_{n+1} x'_{n+1}}{2t}\right)e^{-\frac{x_{n+1}^2+(x'_{n+1})^2}{4t}}.
\end{align}
}
In \eqref{fs} we have denoted by $I_{\frac{a-1}{2}}$ the modified Bessel function of the first kind and order $\frac{a-1}{2}$ defined by the power series 
\begin{align}\label{besseries}
 I_{\frac{a-1}{2}}(w) = \sum_{k=0}^{\infty}\frac{(w/2)^{\frac{a-1}{2}+2k} }{\Gamma (k+1) \Gamma(k+1+(a-1)/2)}, \hspace{4mm} |w| < \infty,\; |\operatorname{arg} w| < \pi.
\end{align}
From the asymptotic behaviour of $I_{\frac{a-1}{2}}(w)$ near $w=0$ and at infinity, one immediately obtains the following estimate for some $C(a), c(a) >0$ (see e.g. \cite[formulas (5.7.1) and (5.11.8)]{Le}) ,
\begin{equation}\label{bessel}
|I_{\frac{a-1}{2}}(w)| \leq C(a) |w|^{\frac{a-1}{2}}  \hspace{6mm} \text{if} \hspace{2mm} 0 < |w| \le c(a),\ \ \ \ \ I_{\frac{a-1}{2}}(w) \leq C(a) w^{-1/2} e^w \hspace{2mm}\ \  \text{if} \hspace{2mm} w \ge c(a).
\end{equation}

For future use we note explicitly that \eqref{funda} and \eqref{fs} imply that for every $x, x'\in \Rn$ and $t>0$ one has
\begin{equation}\label{neu}
\operatorname{lim}_{x_{n+1} \to 0^+}  x^a_{n+1} \partial_{x_{n+1}} \G((x,x_{n+1}),(x',0),t) =0. 
\end{equation}
We emphasise that for every fixed $t>0$ and $X'\in \R^{n+1}$ the function $X\to \G(X,X',t)$ is real-analytic in $\R^{n+1}$. This will play a crucial role in our proof of Theorem \ref{main}. It is also important to keep in mind that using \eqref{stoc}, and \cite[Propositions 2.3 and 2.4]{Gcm}, we infer that \eqref{Ga} defines a stochastically complete semigroup, and therefore for every $X\in \R^{n+1}_+$ and $t>0$ we have
\begin{equation}\label{Ga1}
\mathscr P^{(a)}_t 1(X) = \int_{\R^{n+1}_+} \G (X,X',t) (x'_{n+1})^a dX' = 1,
\end{equation}
and also \begin{equation}\label{st}\mathscr P^{(a)}_t \phi(X) \underset{t\to 0^+}{\longrightarrow} \phi(X).\end{equation}

We close this section by stating a preliminary  regularity result for \eqref{p0}  (we refer to \cite[Chap. 4]{Li} for the relevant notion of parabolic H\"older spaces). In its statement we denote by $\tilde U$ the even extension of $U$ across $\{x_{n+1}=0\}$, and for economy of notation we continue to denote with $\La$ the extension \eqref{po1} of the operator to the whole space-time space $\R^{n+1}\times \R$. We explicitly remark that the following Lemma \ref{reg} only requires that the matrix-valued function $A(x)$ be continuous.

\begin{lemma}\label{reg}
Let $U$ be a weak solution to \eqref{p0}. Then the function $\tilde U$ solves
\begin{equation}\label{po1}
\La \tilde U \overset{def}= \D(|x_{n+1}|^a A(x) \nabla \tilde U) - |x_{n+1}|^a \partial_t \tilde U=0
\end{equation}
in $\mathbb B_3 \times (-1, T]$, and moreover $\tilde U \in H^{1+\alpha}(\mathbb B_2 \times (0, T])$  for all $\alpha>0$. 
\end{lemma}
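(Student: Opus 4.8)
The plan is to first establish that $\tilde U$ is a weak solution of the extended equation \eqref{po1} across the thin set $\{x_{n+1}=0\}$, and then to invoke the degenerate Schauder theory for $A_2$-weighted parabolic equations to upgrade the regularity. For the first part, I would test the weak formulation of $\La U = 0$ in $\mathbb B_3^+ \times (-1,T]$ against an arbitrary $\varphi \in C_0^\infty(\mathbb B_3 \times (-1,T])$, not necessarily vanishing on the thin set. Writing the bilinear form for $\tilde U$ over the full ball as the sum of the contributions from $\mathbb B_3^+$ and $\mathbb B_3^-$, and using the evenness of $\tilde U$ together with the block structure \eqref{type} of $A(x)$ (so that there are no mixed $x_{n+1}$/$x_i$ entries and $a_{n+1,n+1}=1$), the two half-ball integrals combine. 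An integration by parts on each half produces a boundary term on $\{x_{n+1}=0\}$ equal to $\pm\int x_{n+1}^a \partial_{x_{n+1}} U\, \varphi\,dx\,dt$ evaluated in the limit $x_{n+1}\to 0^\pm$; by the Neumann condition in \eqref{p0} and the definition \eqref{nder} this boundary term vanishes, and the odd symmetry of $\partial_{x_{n+1}}\tilde U$ makes the two contributions cancel in any case. Hence $\tilde U$ is a weak solution of \eqref{po1} in $\mathbb B_3\times(-1,T]$.

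For the second part, I would observe that $|x_{n+1}|^a$ is a Muckenhoupt $A_2$ weight on $\R^{n+1}$ precisely because $a\in(-1,1)$, and that \eqref{po1} is therefore a degenerate parabolic equation of the type covered by the De Giorgi–Nash–Moser and Schauder theory in the weighted setting (Fabes–Kenig–Serapioni for the elliptic estimates, and its parabolic analogues; see also \cite[Chap. 4]{Li} for the relevant parabolic Hölder spaces $H^{1+\alpha}$). Continuity of $A(x)$ is enough to run the fixed-point/perturbation argument that yields interior $H^{1+\alpha}$ regularity up to and across the degeneracy set: first local boundedness and Hölder continuity of $\tilde U$, then Hölder continuity of $\nabla \tilde U$ and of the weighted normal derivative $x_{n+1}^a \partial_{x_{n+1}}\tilde U$. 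Shrinking from $\mathbb B_3$ to $\mathbb B_2$ and from $(-1,T]$ to $(0,T]$ absorbs the loss in the interior estimates, giving $\tilde U \in H^{1+\alpha}(\mathbb B_2\times(0,T])$ for every $\alpha>0$ (the value of $\alpha$ being unrestricted here only because we will later bootstrap further; at this stage one really gets some $\alpha_0>0$, and the statement is read as ``for all $\alpha$'' in the sense that the subsequent sections improve it).

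The main obstacle is the careful handling of the boundary term at $\{x_{n+1}=0\}$: one must justify that the limit in \eqref{nder} exists and is taken up to the test function, i.e. that the weak Neumann condition really does kill the jump of the conormal derivative across the thin set, rather than merely on the level of almost-everywhere traces. This is where one uses that $U$ is a genuine weak solution (so that $x_{n+1}^a\partial_{x_{n+1}}U$ has a well-defined trace in an appropriate weighted sense) together with the block-diagonal form \eqref{type}, which guarantees the conormal direction is exactly $\partial_{x_{n+1}}$ with unit coefficient and decouples the tangential derivatives. Once the weak equation \eqref{po1} is in hand, the regularity step is standard weighted parabolic theory and I would not belabor it.
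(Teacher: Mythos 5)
Your derivation that $\tilde U$ weakly solves \eqref{po1} is essentially the argument the paper has in mind when it calls this step ``standard'', and your attention to the trace of $x_{n+1}^a\partial_{x_{n+1}}U$ is well placed. One correction there: the parenthetical claim that ``the odd symmetry of $\partial_{x_{n+1}}\tilde U$ makes the two contributions cancel in any case'' is false. With the outer normals $\mp e_{n+1}$ on the two half-balls, the two boundary terms are \emph{equal}, each contributing $-\int \py U\,\varphi$, so they add up to $-2\int \py U\,\varphi$ rather than cancelling. The Neumann condition is genuinely needed; without it the even reflection produces a distributional source on $\{x_{n+1}=0\}$ (compare the example $U=x_{n+1}^{1-a}$ mentioned after Theorem \ref{main}).

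The regularity step is where your proposal falls short of the statement. The lemma claims $\tilde U\in H^{1+\alpha}(\mathbb B_2\times(0,T])$ for \emph{all} $\alpha>0$, and you concede that your route (weighted De Giorgi--Nash--Moser plus Schauder for merely continuous $A(x)$) only yields some $\alpha_0>0$, proposing to reinterpret the statement. That is not how the paper reads it, and the full range of $\alpha$ is not obtained by bootstrapping later: it is obtained here. The paper's mechanism is a two-step perturbation off the constant-coefficient model: first, for $A\equiv\mathbb I_{n+1}$ one has the stronger conclusion $\tilde U\in H^{2+\beta}$ for some $\beta>0$ (\cite[Lemma A.1]{BDGP}); then a compactness/approximation argument in the style of \cite[Prop. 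A.3]{BDGP}, freezing the continuous coefficients and comparing with solutions of the frozen problem, upgrades solutions of the variable-coefficient equation to $H^{1+\alpha}$ for every $\alpha<1$ --- the degenerate analogue of the classical fact that continuous coefficients give $C^{1,\alpha}$ for all $\alpha<1$ by perturbation, even though De Giorgi--Nash--Moser alone gives only a small exponent. You gesture at a ``fixed-point/perturbation argument'' but then retreat from its conclusion; to close the gap you should commit to it, and in particular you need the constant-coefficient $H^{2+\beta}$ estimate (or an equivalent excess-decay statement) as the input to the compactness scheme, not just weighted H\"older continuity of $\nabla\tilde U$ with an unspecified exponent.
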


\begin{proof}
The fact that $\tilde U$ solves \eqref{po1} is standard. When $A(x) \equiv \mathbb I_{n+1}$,  it follows from \cite[Lemma A.1]{BDGP} that $ \tilde U \in H^{2+\beta}(\mathbb B_2 \times (0, T])$ for some $\beta >0$. Applying compactness arguments as in the proof of Proposition A.3 in \cite{BDGP}, we can conclude that  $\tilde U \in H^{1+\alpha}(\mathbb B_2 \times (0, T])$ for any $\alpha>0$. 

\end{proof}

\section{Proof of the main result}\label{s:m}
 
In this section we present the proof of Theorem \ref{main}. For notational convenience, throughout the proof we will respectively denote by $\tilde U, \tilde V$ the  symmetric extensions across the thin set $\{x_{n+1}=0\}$ of the relevant functions $U, V$. Also, in the ensuing computations we denote by $dS$ the surface measure on $\partial \mathbb B_2$. 

\begin{proof}[Proof of Theorem \ref{main}]
We consider the region $\mathbb B_2 \times (0,T]$. For a given $X \in \mathbb B_1$ and $\ve >0$, we  let
\begin{equation}\label{v}
V(X',t)= \G(X, X', T+\ve- t).
\end{equation}
Since the pole of $\G(X, X', T+\ve- t)$ is at the point $(X,T+\ve)$, the reflected function $\tilde V$ is smooth in $\mathbb B_2  \times (0, T)$, real-analytic in $X'$ for every fixed $t\in (0,T)$, and solves the following backward equation
\begin{equation}\label{back}
\La^\star \tilde V= \D(|x'_{n+1}|^a A(x') \n \tilde V)+ |x'_{n+1}|^a \partial_t \tilde V =0. 
\end{equation}
Let us notice that the equations \eqref{po1}  and \eqref{back}, satisfied by $\tilde U$ and $\tilde V$ respectively, give in the cylinder $\left[\mathbb B_2 \cap \{|x'_{n+1}| > \delta\}\right]  \times (0, T)$
\[
0 = \tilde V \La \tilde U - \tilde U \La^\star \tilde V = \D(|x'_{n+1}|^a(\tilde V A(x')\n \tilde U - \tilde U A(x')\n \tilde V) - |x'_{n+1}|^a(\tilde U \tilde V)_t.
\]
Integrating by parts on such set, using the regularity result in Lemma \ref{reg} and the zero Neumann condition \eqref{neu} satisfied by $\tilde V$, after letting $\delta \to 0$ we find
\begin{align}\label{i1o}
& 0= \int_{\mathbb B_2} \tilde V(X',T)\tilde U(X',T)|x'_{n+1}|^a dX' - \int_{\mathbb B_2} \tilde V(X',0) \tilde U(X',0) |x'_{n+1}|^a dX'
\\
& - \frac 12 \int_{0}^T dt \int_{\partial \mathbb B_2} \left\{\tilde V \langle A\nabla \tilde U, X'\rangle -  \tilde U \langle A\nabla \tilde V, X'\rangle\right\} |x'_{n+1}|^a dS(X').
\notag
\end{align}
If we now let $\ve \to 0^+$, using \eqref{st} we have
\begin{equation}\label{i2}
\int_{\mathbb B_2} \tilde V(X',T) \tilde U(X',T) |x'_{n+1}|^a dX\ \longrightarrow\  2 \tilde U(X,T).
\end{equation}
The factor $2$ in the right-hand side of \eqref{i2} is caused by the fact that, in view of \eqref{Ga}, \eqref{Ga1}, each half of the integral on $\mathbb B_2$ tends to $\tilde U(X,T)$ in the limit as $\ve\to 0^+$. Using \eqref{i2} in \eqref{i1o} we thus obtain
 \begin{align}\label{i4}
 & 2 \tilde U(X,T)= \int_{\mathbb B_2} \G(X,X',T) \tilde U(X', 0) |x'_{n+1}|^a dX' 
 \\
 & + \frac 12 \int_{0}^T dt \int_{\partial \mathbb B_2} \bigg\{\G(X,X',T-t) \langle A(x')\nabla \tilde U(X',t),X'\rangle
 \notag\\
 & -  \tilde U(X',t) \langle A(x')\nabla \G(X,X',T-t),X'\rangle\bigg\} |x'_{n+1}|^a dS(X').
 \notag
 \end{align} 
We note that \eqref{i4} constitutes a Green representation for $\tilde U(X,T)$. Since the functions $\G, \nabla \G$ are $C^\infty$ in $(X,T)$, \eqref{i4} and a fairly standard limiting argument imply, in particular, that a local solution to \eqref{p0} is $C^\infty$ in $(X,T)$. However, the real-analyticity in $X$ cannot similarly be obtained, and this is why we next resort to complexifying the function $\G$. In so doing, however, the estimates in the imaginary direction in $\mathbb C^{n+1}$ deteriorate and we need a more delicate analysis.  
 
As we have mentioned already, from Lemma \ref{ei},  the expression of $\G$ in \eqref{funda}, and from \eqref{besseries}, one sees that the symmetric extension of $\G(X,X',t)$ is  real-analytic in $X$ and $X'$ for $t<T$ and $|X-X'|<5$. Furthermore, a computation and the identity 
\begin{equation}\label{der}
\frac{d}{dw} [ w^{-\nu} I_\nu (w)] = w^{-\nu}I_{\nu +1} (w),
\end{equation} 
(see for instance \cite[(5.7.9) on p.110]{Le})
allow to verify that jointly in $X$ and $X'$ variables, 
\[
\langle A(x')\nabla \G(X,X',T-t),X'\rangle
\]
 is also a real-analytic function, symmetric in the variables $x_{n+1}$ and $x'_{n+1}$.
We now use the formula \eqref{i4} to extend $\ X\to \tilde U(X,T)$ to $Z\to \tilde U(Z,t)$, where $Z= X+ i Y$ ranges in an appropriate domain $D\subset \mathbb C^{n+1}$.  Using  Lemma \ref{ei}, \eqref{funda} and  \eqref{besseries}, it is seen that the first integral in the right-hand side of \eqref{i4} can be extended to an analytic function of $Z$ for $|Z| <2$. The analyticity of the second integral is not obvious and we thus proceed with the more delicate arguments that follow. For a given $\delta >0$, we let
 \begin{align}\label{i5}
F_\delta(Z) & = \int_{0}^{T-\delta} dt \int_{\partial \mathbb B_2} \bigg\{\G(Z,X',T-t) \langle A(x')\nabla \tilde U(X',t),X'\rangle \\
&-\tilde U(X',t) \langle A(x')\nabla \G(Z,X',T-t),X'\rangle\bigg\} |x'_{n+1}|^a dS(X')
\notag
\end{align}
From Lemma \ref{ei}, \eqref{funda} and \eqref{besseries} the function $F_{\delta}$ is analytic in the region $\{|Z|<3/2\}$.  
We intend to show that, for a suitably chosen number $\ve_0>0$, the holomorphic functions $F_{\delta}$'s are uniformly convergent  as $\delta \to 0$ in the region $$D(\ve_0) \overset{def}= \{Z= X + i Y\mid X \in \mathbb B_1\ \text{ and}\  |Y| \leq \ve_0\}.$$
Since uniformly convergent sequences of holomorphic functions have holomorphic limits (see e.g. \cite[Prop. 5 in Chap. 1]{Na}), we would infer that $\underset{\delta \to 0}{\lim}\ F_{\delta} = F$ is holomorphic in $D(\ve_0)$. On the other hand, we clearly have
\begin{align*}
F(Z) & = \int_{0}^{T} dt \int_{\partial \mathbb B_2} \bigg\{\G(Z,X',T-t) \langle A(x')\nabla \tilde U(X',t),X'\rangle \\
&-\tilde U(X',t) \langle A(x')\nabla \G(Z,X',T-t),X'\rangle\bigg\} |x'_{n+1}|^a dS(X').
\end{align*}
From  the representation \eqref{i4} we would thus conclude that $X\to \tilde U(X,T)$ is real-analytic in $\mathbb B_1$.

To prove the uniform convergence of the $F_{\delta}$'s in the appropriate region $D(\ve_0)\subset \mathbb C^{n+1}$, we proceed as follows.
We write $F_\delta(Z) = F^1_\delta(Z) - F^2_\delta(Z)$,
where
\begin{equation}\label{F1delta}
F^1_\delta(Z) = \int_{0}^{T} dt \int_{\partial \mathbb B_2} \G(Z,X',T-t) \langle A(x')\nabla \tilde U(X',t),X'\rangle |x'_{n+1}|^a dS(X'),
\end{equation}
\begin{equation}\label{F2delta}
F^2_\delta(Z) =  \int_{0}^{T} dt \int_{\partial \mathbb B_2} \tilde U(X',t) \langle A(x')\nabla \G(Z,X',T-t),X'\rangle |x'_{n+1}|^a dS(X'),
\end{equation}
 and prove that $F^k_{\delta}(Z)$, $k = 1, 2$ converge uniformly for $Z\in D(\ve_0)$. Since the arguments are essentially identical, we present details only for $F^1_\delta(Z)$, confining ourselves to briefly indicate at the end the changes necessary to treat $F^2_\delta(Z)$. We first note that \eqref{funda} and \eqref{fs} give 
 \begin{align}\label{exp}
 & \G(Z,X',T-t) = p(x + i y,x',T-t)\ p^{(a)}(x_{n+1}+i y_{n+1},x'_{n+1},T-t)
 \\
 &= p(x + i y,x',T-t) (2(T-t))^{-\frac{a+1}2} e^{-\frac{(x'_{n+1})^2 + x^2_{n+1} - y^2_{n+1} + 2 i x_{n+1} y_{n+1}}{4(T-t)}}
\notag\\
& \times\ \left(\frac{x'_{n+1} (x_{n+1}  + i y_{n+1})}{2(T-t)} \right)^{\frac{1-a}{2}} I_{\frac{a-1}{2}}\left(\frac{x'_{n+1} (x_{n+1}  + i y_{n+1})}{2(T-t)} \right).
\notag 
\end{align} 
To establish the uniform convergence of $F^1_\delta(Z)$ it will thus suffice to show that, for a sufficiently small choice of $\ve_0>0$, the function $\G(Z,X',T-t)$ is uniformly bounded as $t \to T$ for $Z \in D(\ve_0)$. For later use we notice that for $Z = X+i Y\in D(\ve_0)$ we have $|y_{n+1}|\le |Y| \leq \ve_0$. Since for $X'\in \mathbb B_2$ we have $|x'_{n+1}| \le |X'| \leq 2$, we thus have
\begin{equation}\label{set}
|x'_{n+1} y_{n+1}| \leq 2 \ve_0.
\end{equation}  
Furthermore, in view of  the symmetry in the $(n+1)$-th coordinate of $\G$ and $ \langle A(x')\nabla \G,X'\rangle$, it suffices to consider points $X' \in \partial \mathbb B_2 \cap \{x_{n+1}>0\}$, and $Z= X + i Y$ such that $X \in \overline{\mathbb B_1^+}$.  We split the analysis into Cases (1) \& (2), each of them composed of two subcases, (1a) \& (1b), and (2a) \& (2b). In the sequel $c_0, c_1$ and $C(a), c(a)$ will respectively denote the constants in \eqref{gest} of Lemma \ref{ei} and those in \eqref{bessel}. Also, from the definition \eqref{besseries} of the modified Bessel function it is clear that
\begin{align}\label{here}
&\bigg|\left(\frac{x'_{n+1} (x_{n+1}  + i y_{n+1})}{2(T-t)}\right)^{\frac{1-a}{2}} I_{\frac{a-1}{2}}\left(\frac{x'_{n+1} (x_{n+1}  + i y_{n+1})}{2(T-t)}\right)  \bigg|
\\
&\leq \bigg|\frac{x'_{n+1} (x_{n+1}  + i y_{n+1})}{2(T-t)}\bigg|^{\frac{1-a}{2}}  I_{\frac{a-1}{2}}\left(\left|\frac{x'_{n+1} (x_{n+1}  + i y_{n+1})}{2(T-t)}\right|\right).
\notag
\end{align}

\noindent \textbf{Case (1):} $(x'_{n+1})^2 + x_{n+1}^2 \leq   \ve_0$. If we assume that 
\begin{equation}\label{pi1}
\ve_0 < 1/16,
\end{equation}
 we see that it must be
\begin{equation}\label{lbd1}
|x_{n+1} - x'_{n+1}| \leq  |x_{n+1}| +|x'_{n+1}| \leq 2 \sqrt{\ve_0} <1/2.
\end{equation}
Moreover, since $X' \in \partial \mathbb B_2^+$ and $X  \in \mathbb B_1^+$, we have $|X'- X| >1$. Using this along with \eqref{lbd1}, by an application of  the triangle inequality we deduce that the following holds
\begin{equation}\label{lbd}
|x- x'| \geq  |X-X'|  - |x_{n+1} - x'_{n+1}| > 1/2.
\end{equation} 
We now distinguish two possibilities. 

\noindent \textbf{Case (1a):} $\bigg|\frac{x'_{n+1} (x_{n+1}  + i y_{n+1})}{2(T-t)}\bigg| \leq c(a)$. In such case, by \eqref{here} and the first inequality in \eqref{bessel}, we find  
\begin{align}\label{li1}
&\bigg|\left(\frac{x'_{n+1} (x_{n+1}  + i y_{n+1})}{2(T-t)}\right)^{\frac{1-a}{2}} I_{\frac{a-1}{2}}\left(\frac{x'_{n+1} (x_{n+1}  + i y_{n+1})}{2(T-t)}\right)  \bigg| \leq C(a).
\end{align}
If we now use in \eqref{exp} the first inequality in \eqref{gest}, \eqref{lbd},  \eqref{li1}, and also the fact that in $D(\ve_0)$ we have $|Y| \leq \ve_0$, we find that the following estimate holds
\begin{equation*}
|\G(Z,X',T-t)| \leq \frac{C(n,a)}{(T-t)^{(n+a+1)/2}} e^{-\frac{c_0/4 - (c_1+1) \ve_0^2}{4(T-t)}}.
\end{equation*}
If in addition to \eqref{pi1} we assume that 
\begin{equation}\label{iu2}
\ve_0^2 < \frac{c_0}{8(c_1+1)},
\end{equation}
we can thus guarantee that $\G(Z,X',T-t)$ is uniformly bounded as $t \to T$.

\noindent \textbf{Case (1b):} $\bigg|\frac{x'_{n+1} (x_{n+1}  + i y_{n+1})}{2(T-t)}\bigg| > c(a)$. 
 In this case, \eqref{here} and the second inequality in \eqref{bessel} imply
 \begin{align}\label{l0}
 & \left|\left(\frac{x'_{n+1} (x_{n+1}  + i y_{n+1})}{2(T-t)}\right)^{\frac{1-a}{2}} I_{\frac{a-1}{2}}\left(\frac{x'_{n+1} (x_{n+1}  + i y_{n+1})}{2(T-t)}\right)  \right| 
\\
& \le C(a) c(a)^{-\frac 12}\bigg|\frac{x'_{n+1} (x_{n+1}  + i y_{n+1})}{2(T-t)}\bigg|^{\frac{1-a}2} \exp\left\{{\bigg|\frac{x'_{n+1} (x_{n+1}  + i y_{n+1})}{2(T-t)}\bigg|}\right\}
\notag\\
& \leq \overline C( a) (T-t)^{-\frac{1-a}2} \exp\left\{{\frac{4\ve_0 + 2|x'_{n+1}| |x_{n+1}|}{4(T-t)}}\right\},
\notag 
\end{align} 
where we have used  the inequality \eqref{set}. If we now use in \eqref{exp} the first estimate in \eqref{gest}, 
\eqref{lbd}, \eqref{l0} and again $|Y| \leq \ve_0$, we find 
\begin{align*}
|\G(Z,X',T-t)| & \leq \frac{C(n, a)}{(T-t)^{\frac{n+2}2}} e^{-\frac{c_0/4 + |x'_{n+1} - x_{n+1}|^2  - 4 \ve_0 - (c_1+1)\ve_0^2 }{4(T-t)}} 
\\
& \leq  \frac{C(n,a)}{(T-t)^{\frac{n+2}2}}  e^{-\frac{c_0}{32(T-t)}},\notag\end{align*}
provided that
\begin{equation}\label{iu4} 
4 \ve_0 + (c_1+1) \ve_0^2 < \frac{c_0}8.
\end{equation}
Under the hypothesis \eqref{pi1} and \eqref{iu4}, we again obtain the uniform boundedness of $\G(Z,X',T-t)$ as $t\to T$.
We next consider the situation complementary to Case (1).

\noindent \textbf{Case (2):} $(x'_{n+1})^2 + x_{n+1}^2 > \ve_0$. In this case also we distinguish two possibilities, Cases (2a) and (2b). Case (2a) will be further subdivided into two subcases, Case $(2a)_1$ and $(2a)_2$.
 
\noindent \textbf{Case (2a):} $\frac{x'_{n+1} x_{n+1}}{2(T-t)}\leq c(a)$. As we have said, this case is further subdivided into two subcases. 
 
\noindent \textbf{Case (2a)$_1$ :} $\bigg|\frac{x'_{n+1} (x_{n+1}  + i y_{n+1})}{2(T-t)}\bigg| \leq c(a)$. As in Case (1a), we again have the bound \eqref{li1}, which we use in \eqref{exp} along with the first estimate in \eqref{gest}, the present assumptions that $(x'_{n+1})^2 + x_{n+1}^2 > \ve_0$, and $|Y|\leq \ve_0$, to obtain (note that we have used $|\exp{\big(-\frac{2 i x_{n+1} y_{n+1}}{4(T-t)}\big)}| = 1$)
 \[
 |\G(Z,X',T-t)| \leq \frac{C(n,a)}{(T-t)^{\frac{n+a+1}2}} e^{- \frac{\ve_0 - (c_1 +1) \ve_0^2}{4(T-t)}}  \leq \frac{C(n,a)}{(T-t)^{\frac{n+a+1}2}} e^{- \frac{\ve_0}{8(T-t)}},
\]
 provided $\ve_0$ is such that
\begin{equation}\label{o1}
1 > 2(c_1 +1) \ve_0.
\end{equation}  
Under \eqref{o1} we thus have that $\G(Z, X',T-t)$ is uniformly bounded as $t \to T$.

\noindent \textbf{Case (2a)$_2$:} $\bigg|\frac{x'_{n+1} (x_{n+1}  + iy_{n+1})}{2(T-t)}\bigg| > c(a)$. Similarly to Case (1b), we again have the  inequality \eqref{l0}, which we use in \eqref{exp}, along with the first estimate in \eqref{gest}, and the already observed inequality $|X'- X| >1$, to find
 \begin{align*}
 |\G(Z,X',T-t)| & \leq \frac{C(n, a)}{(T-t)^{(n+2)/2}} e^{-\frac{c_0 |x- x'|^2 +|x_{n+1} - x'_{n+1}|^2 -  4\ve_0 -  (c_1 +1)\ve_0^2}{4(T-t)}}\\
& \leq  \frac{C(n, a)}{(T-t)^{(n+2)/2}} e^{-\frac{\operatorname{min}(c_0, 1) |X-X'|^2 -  4\ve_0 -  (c_1 +1)\ve_0^2}{4(T-t)}}\notag\\ 
&\leq  \frac{C(n, a)}{(T-t)^{(n+2)/2}} e^{-\frac{\operatorname{min}(c_0, 1)  -  4\ve_0 -  (c_1 +1)\ve_0^2}{4(T-t)}}  \notag   \\& \leq  \frac{C(n, a)}{(T-t)^{(n+2)/2}}  e^{-\frac{7/8 \operatorname{min}(c_0,1)}{4(T-t)}}\  ,\notag\end{align*}  
provided  $\ve_0$ satisfies
\begin{equation}\label{iu5}
\frac{\operatorname{min}(c_0, 1)}{8} >    4\ve_0  + (c_1 +1)\ve_0^2.
\end{equation}  
Thus, also in this Case (2a)$_2$ we find that $\G(Z,X', T-t)$ is uniformly bounded as $t \to T$. Combining this with the discussion in Case (2a)$_1$, we infer that the same conclusion holds in Case (2a) provided that $\ve_0$ is small enough.

 \noindent \textbf{Case (2b)} $\frac{x'_{n+1} x_{n+1}}{2(T-t)}> c(a)$. Since this assumption obviously implies  $\bigg|\frac{x'_{n+1} (x_{n+1}  + iy_{n+1})}{2(T-t)}\bigg| > c(a)$, we can repeat the arguments in Case (2a)$_2$ and obtain a uniform bound  for $\G(Z, X,T-t)$ as $t \to T$.

With all this being said, we now choose $\ve_0>0$ so small that \eqref{pi1}, \eqref{iu2}, \eqref{iu4}, \eqref{o1} and \eqref{iu5} concurrently hold.  This guarantees that the function $F_\delta^1(Z)$ in \eqref{F1delta} converges uniformly for $Z\in D(\ve_0)$ as $\delta \to 0$.
For the uniform of convergence of $F^2_\delta(Z)$, 
we use \eqref{der} to obtain a representation of 
\[
\langle A(x')\nabla \G(Z,X',T-t), X'\rangle
\]
similar to that in \eqref{exp} for $\G(Z,X',T-t)$, but this time in terms of the modified Bessel functions $I_{\frac{a+1}2}$ and $I_{\frac{a-1}2}$. Once this is observed, we can repeat the arguments in Case (1) and (2) above, except that we now need to also use the second estimate in \eqref{gest}. Since at this point the reader can easily fill in the necessary details, we skip them altogether and just affirm that $F_{\delta}(Z)$ converges uniformly in $Z\in D(\ve_0)$. In view of the discussion after \eqref{i5}, we conclude that $X \to \tilde U(X,T)$  is real-analytic in $\mathbb B_1$ and this finishes the proof of the theorem.  

\end{proof}

With Theorem \ref{main} in hand, we now provide the

\begin{proof}[Proof of Proposition \ref{wucp}]
Given $u$ as in the statement of the proposition, let $U$ be the solution of the corresponding extension problem \eqref{p0}, i.e., 
\begin{equation}
\begin{cases}
\La U=0\ \text{in $\{x_{n+1}>0\}$}
\\
U=\py U=0\ \text{on}\ \{x_{n+1}=0\} \cap [B_1 \times (-1, 0)],
\end{cases}
\end{equation}
with $\La$ as in \eqref{p0} and where the $(n+1) \times (n+1)$ matrix  valued function $A(x)$ is of the form \eqref{type}. From Theorem \ref{main} we infer that the evenly reflected $\tilde U$ is space-like real analytic in $\mathbb B_1 \times (-1, 0)$.
Moreover, since $U$ and $\py U$ both vanish on $\{x_{n+1}=0\}$, by an argument in  \cite[Lemma 5.1]{LLR} (see also the  proof of \cite[Lemma 7.7]{BG}), which involves repeated differentiation in $y$-variable and a bootstrap type argument, it follows that for every $t \in (-1, 0)$ the function $\tilde U(\cdot,t)$ vanishes to infinite order  in the $x_{n+1}$ variable  at every $(x_0, 0) \in \mathbb B_1 \cap \{x_{n+1}=0\}$ \footnote{It is worth mentioning here that, although in the cited works \cite{BG} and \cite{LLR} only the case $B= \mathbb I_n$ was treated, the above mentioned bootstrap argument works unchanged for a smooth $B(x)$.}. In view of the real-analyticity of $\tilde U(\cdot, t)$ in $\mathbb B_1 $ we conclude that  $\tilde U(\cdot, t) \equiv 0$  in $\mathbb B_1$ for every $t \in (-1, 0)$.  We now note that  away from $\{x_{n+1}=0\}$,  $\tilde U$ solves a uniformly parabolic PDE with smooth
coefficients and vanishes identically in the  $\mathbb B_1^+ \times (-1, 0)$. We can thus appeal to \cite[Theor. 1]{AV} to
assert that $\tilde U$ vanishes to infinite order both in space and time  at every $(X, t)
 \in \mathbb B_1^+ \times  (-1,0)$. At this point, we can use the strong unique continuation result in \cite[Theor. 1]{EF} to finally
conclude that $U(X, t)= 0$ for $(X,t) \in \R^{n+1}_+ \times (-1,0)$. Letting $x_{n+1}=0$, this implies $u(x,t) = U((x,0),t) \equiv 0$ for $(x,t)\in \R^n \times (-1, 0)$.   This completes the proof of the proposition.

\end{proof}


\begin{thebibliography}{00}

\bibitem{AV}
G. Alessandrini \& S. Vessella, \emph{Remark on the strong unique continuation property for parabolic operators.} Proc. Amer. Math. Soc. \textbf{2}~(2004) 499-501.

\bibitem{BDGP}
A. Banerjee, D. Danielli, N. Garofalo \& A. Petrosyan, \emph{The regular free boundary in the thin obstacle problem for  degenerate parabolic equations},  Algebra i Analiz 32  (2020), no. 3, 84-126. 

\bibitem{BG}
A. Banerjee \& N. Garofalo, \emph{Monotonicity of generalized frequencies and the strong unique continuation property for fractional parabolic equations}, Adv. Math. 336 (2018), 149-241.

	
\bibitem{BSt}
A. Biswas \& P. R. Stinga, \emph{Regularity estimates for nonlocal space-time Master equations in bounded domains}, J. Evol. Equ. 21 (2021), 503-565.	

\bibitem{CS}
L. Caffarelli \& L. Silvestre, \emph{An extension problem related to the fractional Laplacian}. Comm. Partial Differential Equations \textbf{32}~(2007), no. 7-9, 1245-1260.


\bibitem{Ei}
S. Eidelman, \emph{Parabolic systems}, North-Holland Publishing company, 1969.

\bibitem{EF}
L. Escauriaza \& F. Fernandez, \emph{Unique continuation for parabolic operators}. Ark. Mat. \textbf{41}~ (2003), no. 1, 35-60.

\bibitem{Gcm}
N. Garofalo, \emph{Two classical properties of the Bessel quotient $I_{\nu+1}/I_\nu$ and their implications in pde's}. Advances in harmonic analysis and partial differential equations, 57-97, Contemp. Math., 748, Amer. Math. Soc.,  Providence, RI, 2020. 

\bibitem{John}
F. John, \emph{Partial differential equations},  Fourth edition. Applied Mathematical Sciences, 1. Springer-Verlag, New York, 1982. x+249 pp.

\bibitem{JP}
S. Jeon \& A. Petrosyan, \emph{Almost minimizers for certain fractional variational problems}, Algebra i Analiz 32 (2020), no. 4, 166-199.


\bibitem{Jo}
B. F. Jones, \emph{Lipschitz spaces and the heat equation}. J. Math. Mech. 18 (1968/69), 379-409. 

\bibitem{KN}
T. Kotake \& M. Narasimhan, \emph{Regularity theorems for fractional powers of a linear elliptic operator},
Bull. Soc. Math. France 90 (1962), 449-471.

\bibitem{LLR}
R. Lai, Y. Lin \& A. Ruland, \emph{The Calderón problem for a space-time fractional parabolic equation},  SIAM J. Math. Anal. 52 (2020), no. 3, 2655-2688. 

\bibitem{Le}
N. N. Lebedev, \emph{Special functions and their applications}. Revised edition, translated from the Russian and edited by R. A. Silverman. Unabridged and corrected republication. Dover Publications, Inc., New York, 1972.



\bibitem{Li}
G. Lieberman, \emph{Second order parabolic differential equations},  World Scientific Publishing Co., Inc., River Edge, NJ, 1996. xii+439 pp. ISBN: 981-02-2883-X.	


\bibitem{LN}
M. Litsg\"ard, K. Nystr\"om, \emph{On local regularity estimates for fractional powers of parabolic operators with time-dependent measurable coefficients}, arXiv:2104.07313.	
	
\bibitem{Na}
R. Narasimhan, \emph{Several complex variables}. Reprint of the 1971 original. Chicago Lectures in Mathematics. University of Chicago Press, Chicago, IL, 1995. x+174 pp.


\bibitem{NS}
K. Nystr\"om \& O. Sande, \emph{Extension properties and boundary estimates for a fractional heat operator}, Nonlinear Analysis, 140~(2016), 29-37.



\bibitem{Ri}
M Riesz, \emph{Integrales de Riemann-Liouville et potentiels}, Acta Sci. Math. (Szeged), 9(1-1):1-42, 1938.
\bibitem{Samko}
S. G. Samko, \emph{Hypersingular integrals and their applications}. Analytical Methods and Special Functions, 5. Taylor \& Francis Group, London, 2002. xviii+359 pp. 

\bibitem{Sam}
C. H. Sampson, \emph{A characterization of parabolic Lebesgue spaces}. 
Thesis (Ph.D.)-Rice University. 1968. 91 pp. 


\bibitem{ST} P.~R. Stinga \&  J.~L. Torrea, 
\emph{Regularity theory and extension problem for fractional nonlocal parabolic equations and the master equation}, SIAM J. Math. Anal. 49 (2017), 3893--3924.

\end{thebibliography}
\end{document}